\newtheorem{theorem}{Theorem}[section]
\newtheorem{lemma}[theorem]{Lemma}
\newtheorem{proposition}[theorem]{Proposition}
\newtheorem{corollary}[theorem]{Corollary}
\theoremstyle{definition}
\newtheorem{definition}[theorem]{Definition}
\theoremstyle{remark}
\def\N{\mathbb{N}}
\def\Z{\mathbb{Z}}
\def\R{\mathbb{R}}
\def\F{\mathcal{F}}
\begin{document}

\title
{Intersections of shifted sets}

\author{Mauro Di Nasso}

\address{Dipartimento di Matematica\\
Universit\`a di Pisa, Italy}

\email{dinasso@dm.unipi.it}

\date{}

\begin{abstract}
We consider shifts of a set $A\subseteq\N$ by elements from 
another set $B\subseteq\N$, and prove intersection 
properties according to the relative asymptotic size of $A$ and $B$.
A consequence of our main theorem is the following:
If $A=\{a_n\}$ is such that 
$a_n=o(n^{k/k-1})$, then 
the $k$-recurrence set $R_k(A)=\{x\mid |A\cap(A+x)|\ge k\}$
contains the distance sets  of arbitrarily large finite sets.
\end{abstract}

\subjclass[2000]
{05B10, 11B05, 11B37.}

\keywords{Asymptotic density, Delta-sets, $k$-Recurrence sets.}

\maketitle

\section{Introduction}

It is a well-know fact that if a set of natural numbers $A$
has positive upper asymptotic density, then its \emph{set of distances}
$$\Delta(A)\ =\ \{a'-a\mid a',a\in A, a'>a\}$$
meets the set of distances $\Delta(X)$ of any infinite set $X$ (see, \emph{e.g.}, \cite{be}).
In consequence, $\Delta(A)$ is \emph{syndetic}, that is there exists $k$
such that $\Delta(A)\cap I\ne\emptyset$ for every interval $I$ of length $k$.
It is a relevant theme of research in combinatorial number theory to investigate
properties of distance sets according to their ``asymptotic size"
(see, \emph{e.g.}, \cite{ru,sa,ess,behl}.)

\smallskip
The sets of distances are generalized by the \emph{$k$-recurrence sets},
namely the sets of those numbers that are the common distance of at least
$k$-many pairs:
$$R_k(A)\ =\ \{x\mid |A\cap(A+x)|\ge k\}\,.$$
Notice that $R_1(A)=\Delta(A)$.
We now further generalize this notion. 

\smallskip
Let $[A]^h=\{Z\subseteq A\mid |Z|=h\}$
denote the family of all finite subsets of $A$ of cardinality $h$,
namely the \emph{$h$-tuples} of $A$. 

\smallskip
\begin{definition}
For $k,h\in\N$ with $h>1$, the $(h,k)$-\emph{recurrence set} of $A$ is the following set
of $h$-tuples:
$$R_{k}^{h}(A)=\left\{\{t_1<\ldots<t_h\}\in[\N]^h\,\big|\,
|(A+t_1)\cap\ldots\cap(A+t_h)|\ge k\right\}.$$
\end{definition}

\smallskip
Note that a pair $\{t<t'\}\in R^2_k(A)\Leftrightarrow t'-t\in R_k(A)$, because trivially
$|(A+t)\cap(A+t')|=|A\cap(A+(t'-t))|$. 

\smallskip
For sets of natural numbers, we write $A=\{a_n\}$ to mean that 
elements $a_n$ of $A$ are arranged in increasing order. 
We adopt the usual ``little-O" notation, and for functions 
$f:\N\to\R$, we write $a_n=o(f(n))$ to mean that $\lim_{n\to\infty}a_n/f(n)=0$.

\smallskip
Our main result is the following.

\smallskip
\begin{itemize}
\item
\textbf{Theorem \ref{main}.}
\emph{Let $A=\{a_n\}$ and $B=\{b_n\}$ be infinite sets of natural numbers,
and let:\footnote
{~By \emph{limit inferior} of a double sequence 
$\langle c_{nm}\mid (n,m)\in\N\times\N\rangle$ we mean
$$\liminf_{n,m\to\infty}c_{nm}=\lim_{k\to\infty}\left(\inf_{n,m\ge k}c_{nm}\right).$$
}
$$\liminf_{n,m\to\infty}\frac{a_n+b_m}{n\sqrt[k]{m}}\ =\ l\,.$$
If $l<\frac{1}{\sqrt[k]{h-1}}$ then $R_k^h(A)\cap[B]^h\ne\emptyset$;
and if $l=0$ then $R_k^h(A)\cap[B]^h$ is infinite for all $h$.}
\end{itemize}

\smallskip
(Notice that when $k=1$, for every infinite set $A$ 
one has $R_1^h(A)\ne\emptyset$ for all $h$).
As a consequence of the theorem above, the following 
intersection property is obtained.

\smallskip
\begin{itemize}
\item
\textbf{Theorem \ref{thm2}.}
\emph{Let $k\ge 2$. If the infinite set $A=\{a_n\}$ is such that 
$a_n=o(n^{k/k-1})$ then $R_k(A)$ is a ``finitely Delta-set", 
that is $\Delta(Z)\subseteq R_k(A)$ 
for arbitrarily large finite sets $Z$.}
\end{itemize}

\smallskip
(When $k=1$, $R_1(A)=\Delta(A)$ is trivially a ``finitely Delta-set".)

\smallskip
All proofs contained in this paper have been first
obtained by working with the \emph{hyperintegers} of nonstandard analysis.
(Nonstandard integers seem to provide a convenient framework
to investigate combinatorial properties of numbers which
depend on density; see, \emph{e.g.}, \cite{ji1,ji2,square}.)
However, all used arguments in our original proof could be translated in terms
of limits of subsequences in an (almost) straightforward manner,
with the only inconvenience of a heavier notation. So, we 
eventually decided to keep to the usual language of elementary combinatorics.

\bigskip
\section{The main theorem}

The following finite combinatorial property will be instrumental for the
proof of our main result.

\smallskip
\begin{lemma}\label{lemma}
Let $A=\{a_1<\ldots<a_n\}$ and $B=\{b_1<\ldots<b_m\}$ be 
finite sets of natural numbers. For every $k$ there exists a
subset $Z\subseteq B$ such that

\begin{enumerate}
\item
$|\bigcap_{z\in Z}\left(A+z\right)|\ge k$.
\item
$|Z|\ge L\cdot\left(\frac{n\sqrt[k]{m}}{a_n+b_m}\right)^{\!k}$
where 
$L=\prod_{i=1}^{k-1}\frac{1-\frac{i}{n}}{1-\frac{i}{a_n+b_m}}$.
\end{enumerate}
\end{lemma}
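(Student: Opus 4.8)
The plan is to reformulate both requirements as a single incidence-counting problem and then extract $Z$ by an averaging (pigeonhole) argument. For each $z\in B$ the translate $A+z$ is a set of exactly $n$ natural numbers lying in the integer interval $[\,a_1+b_1,\,a_n+b_m\,]$. I would call a pair $(z,W)$ an \emph{incidence} whenever $z\in B$, $W$ is a $k$-element set of naturals, and $W\subseteq A+z$. The key dictionary is the elementary equivalence: a fixed $k$-set $W$ satisfies $W\subseteq A+z$ for every $z$ in a set $Z\subseteq B$ exactly when $W\subseteq\bigcap_{z\in Z}(A+z)$. Thus producing one $k$-set $W$ that forms an incidence with many elements $z$ will deliver both conclusions simultaneously, taking $Z$ to be precisely the set of those $z$.

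Next I would count the incidences in two ways. Summing first over $z\in B$: each translate $A+z$ has $\binom{n}{k}$ many $k$-subsets, so the total number of incidences is exactly $m\binom{n}{k}$. Summing instead over the $k$-sets $W$: every $W$ occurring in an incidence is a $k$-subset of the integer interval $[\,a_1+b_1,\,a_n+b_m\,]$, so there are at most $\binom{a_n+b_m}{k}$ possible $W$. By the pigeonhole principle there is therefore a fixed $k$-set $W^\ast$ that is an incidence with at least
$$\frac{m\binom{n}{k}}{\binom{a_n+b_m}{k}}$$
many $z\in B$.

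I would then set $Z=\{z\in B\mid W^\ast\subseteq A+z\}$. By construction $W^\ast\subseteq\bigcap_{z\in Z}(A+z)$ with $|W^\ast|=k$, which gives condition (1); and the pigeonhole bound gives the lower estimate on $|Z|$. It then remains only to verify the algebraic identity
$$\frac{m\binom{n}{k}}{\binom{a_n+b_m}{k}}=L\cdot\left(\frac{n\sqrt[k]{m}}{a_n+b_m}\right)^{\!k},$$
which follows by expanding both binomial coefficients as the falling products $n(n-1)\cdots(n-k+1)$ and $(a_n+b_m)(a_n+b_m-1)\cdots(a_n+b_m-k+1)$ and matching the $k$ factors against the definition of $L$, so that condition (2) holds as well.

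I expect the only genuine subtlety to be the bound on the number of candidate sets $W$: one must check that every $W$ appearing in an incidence is confined to $[\,a_1+b_1,\,a_n+b_m\,]$, whose number of integer points is $a_n+b_m-a_1-b_1+1\le a_n+b_m$, so that $\binom{a_n+b_m}{k}$ is a valid upper bound for the averaging and is exactly tight enough to reproduce the stated constant $L$. Everything else is routine double counting; notably, no structure of $A$ or $B$ beyond their cardinalities $n,m$ and their largest elements $a_n,b_m$ enters the argument.
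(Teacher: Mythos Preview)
Your proposal is correct and is essentially the same argument as the paper's: both count pairs $(z,W)$ with $z\in B$ and $W\in[A+z]^k$, obtain the total $m\binom{n}{k}$, bound the number of admissible $k$-sets $W$ by $\binom{a_n+b_m}{k}$ (the paper uses the interval $[1,a_n+b_m]$, you use the slightly tighter $[a_1+b_1,a_n+b_m]$), apply the pigeonhole principle to find a popular $W^\ast$, and take $Z$ to be the set of shifts containing it. The only cosmetic difference is that the paper phrases the double count via characteristic functions $\vartheta_i$ rather than the language of incidences.
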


\begin{proof}
For every $i\le m$, let $A_i=A+b_i$ be the shift of $A$ by $b_i$.
Notice that $|A_i|=|A|=n$ and $A_i\subseteq I=[1,a_n+b_m]$ for all $i$.
Then denote by $\vartheta_i:[\N]^k\to\{0,1\}$
the characteristic function of $[A_i]^k$, and for $H\in[\N]^k$ let
$$f(H)\ =\ \sum_{i=1}^m\vartheta_i(H).$$
Then:
$$\sum_{H\in[I]^k}\!\!\!f(H)\ =\
\sum_{i=1}^m\left(\sum_{H\in[I]^k}\vartheta_i(H)\right)\ =\
\sum_{i=1}^m|[A_i]^k|\ =\ \sum_{i=1}^\nu\binom{n}{k}\ =\ \nu\cdot\binom{n}{k}.$$
Since $|[I]^k|=\binom{a_n+b_m}{k}$, by the \emph{pigeonhole principle}
there exists $H_0\in[I]^k$ such that
\begin{eqnarray*}
f(H_0) &\ge&
\frac{\nu\cdot\binom{n}{k}}{\binom{a_n+b_m}{k}}\ =\
\nu\cdot\frac{n(n-1)(n-2)\cdots(n-(k-1))}{(a_n+b_m)(a_n+b_m-1)\cdots
(a_n+b_m-(k-1))}
\\
\nonumber
{} & = & \nu\cdot L\cdot\left(\frac{n}{a_n+b_m}\right)^{\!k}\ =\ 
L\cdot\left(\frac{n\sqrt[k]{m}}{a_n+b_m}\right)^{\!k}\,,
\end{eqnarray*}
where $L$ is the number defined in the statement of this lemma.
Now consider the set $\Gamma=\{i\in[1,m]\mid H_0\in[A_i]^k\}$.
We have that
$$|\Gamma|\ =\ \sum_{i=1}^m\vartheta_i(H_0)\ =\ f(H_0)\ \ge\
L\cdot\left(\frac{n\sqrt[k]{m}}{a_n+b_m}\right)^{\!k}.$$
Now, $H_0=\{h_1<\ldots<h_k\}\in\bigcap_{i\in\Gamma}[A_i]^k\Rightarrow
|\bigcap_{i\in\Gamma}A_i|\ge k$, and 
the set $Z=\{b_i\mid i\in\Gamma\}$ satisfies the thesis.
\end{proof}

\smallskip
We already noticed that 
$\{t<t'\}\in R^2_k(A)$ if and only if the distance
$t'-t\in R_k(A)$. More generally, one has the property:

\smallskip
\begin{proposition}\label{deltainr}
If $Z\in R_k^h(A)$ then its set of distances 
$\Delta(Z)\subseteq R_k(A)$.
\end{proposition}

\begin{proof}
Let $Z=\{z_1<\ldots<z_h\}$. By the hypothesis,
one finds at least $k$-many elements $\xi_1<\ldots<\xi_k$ in
the intersection $(A+z_1)\cap\ldots\cap(A+z_h)$.
This means that there exist elements $a_{ij}\in A$ 
for $i=1,\ldots,k$ and $j=1,\ldots,h$ such that
$$\xi_i=a_{i1}+z_1\ =\ \ldots\ =\ a_{ij}+z_j\ =\ \ldots\ =\ 
a_{ij'}+z_{j'}\ =\ \ldots\ =\ a_{ih}+z_h.$$
So, for all $1\le j<j'\le h$, we have that
$$a_{ij}=a_{ij'}+(z_{j'}-z_{j})\in A\cap(A+(z_{j'}-z_{j})).$$
Notice that $a_{ij}< a_{i'j}$ for $i<i'$,
so $A\cap(A+(z_{j'}-z_{j}))$ contains at least $k$-many elements.
We conclude that $z_{j'}-z_{j}\in R_k(A)$ 
for all $1\le j<j'\le h$, \emph{i.e.}
$\Delta(Z)\subseteq R_k(A)$.
\end{proof}

\smallskip
We remark that the implication in the above proposition
cannot be reversed when $h>2$.
\emph{E.g.}, if $A=\{1,2,3,5,8\}$ and $F=\{1,2,4\}$ then 
$|A\cap(A+1)|=|A\cap(A+2)|=|A\cap(A+3)|=2$, and
so $\Delta(F)=\{1,2,3\}\subseteq R_2(A)$. However
$F\notin R^3_2(A)$ because $(A+1)\cap(A+2)\cap(A+4)=\emptyset$.

\smallskip
We are finally ready to prove our main theorem.

\smallskip
\begin{theorem}\label{main}
Let $A=\{a_n\}$ and $B=\{b_n\}$ be infinite sets of natural numbers,
and let
$$\liminf_{n,m\to\infty}\frac{a_n+b_m}{n\sqrt[k]{m}}\ =\ l\,.$$
If $l<\frac{1}{\sqrt[k]{h-1}}$ then $R_k^h(A)\cap[B]^h\ne\emptyset$;
and if $l=0$ then $R_k^h(A)\cap[B]^h$ is infinite for all $h$.
\end{theorem}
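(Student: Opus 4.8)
The plan is to deduce the theorem from Lemma \ref{lemma} by applying it to the finite truncations $A^{(n)}=\{a_1<\ldots<a_n\}$ and $B^{(m)}=\{b_1<\ldots<b_m\}$ and then letting $n,m\to\infty$ along a suitable subsequence. Fixing $h$, I would set $c_{nm}=\frac{a_n+b_m}{n\sqrt[k]{m}}$, so that the quantity $\left(\frac{n\sqrt[k]{m}}{a_n+b_m}\right)^{\!k}$ in the lemma equals $c_{nm}^{-k}$. Applying the lemma to $A^{(n)}$ and $B^{(m)}$ yields a set $Z\subseteq B^{(m)}\subseteq B$ with $|\bigcap_{z\in Z}(A^{(n)}+z)|\ge k$ and $|Z|\ge L_{nm}\cdot c_{nm}^{-k}$, where $L_{nm}=\prod_{i=1}^{k-1}\frac{1-i/n}{1-i/(a_n+b_m)}$. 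Two elementary observations drive everything: since $A^{(n)}\subseteq A$, the same intersection bound holds with $A$ in place of $A^{(n)}$; and since $a_n+b_m\to\infty$ and $n\to\infty$, each factor of $L_{nm}$ tends to $1$, so $L_{nm}\to 1$.

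For the first assertion, note that $l<\frac{1}{\sqrt[k]{h-1}}$ is equivalent to $l^{-k}>h-1$ (reading $l^{-k}=+\infty$ when $l=0$). By the definition of $\liminf$, for every $\epsilon>0$ there are arbitrarily large pairs $(n,m)$ with $c_{nm}<l+\epsilon$. Choosing $\epsilon$ small and then $n,m$ large, I can arrange $L_{nm}(l+\epsilon)^{-k}>h-1$, so the integer $|Z|$ satisfies $|Z|\ge h$. Any $h$-element subset $Z'\subseteq Z$ then lies in $[B]^h$ and satisfies $|\bigcap_{z\in Z'}(A+z)|\ge|\bigcap_{z\in Z}(A+z)|\ge k$, since an intersection of fewer sets is larger; hence $Z'\in R_k^h(A)\cap[B]^h$.

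For the second assertion ($l=0$) the same bound makes $|Z|\ge h$ achievable, but I must produce \emph{infinitely many} distinct $h$-tuples. The device is to apply the lemma not to an initial segment of $B$ but to a tail block: fixing $p$, apply it to $A^{(n)}$ and $\{b_{p+1}<\ldots<b_{p+m}\}$, whose cardinality is $m$ but whose largest element is $b_{p+m}$. The lemma's bound then involves $\frac{n\sqrt[k]{m}}{a_n+b_{p+m}}$, which for fixed $p$ is asymptotic to $c_{n,p+m}^{-1}=\frac{n\sqrt[k]{p+m}}{a_n+b_{p+m}}$ as $m\to\infty$; since $l=0$ this can be driven above any threshold, giving $|Z|\ge h$ with $Z\subseteq\{b_{p+1},b_{p+2},\ldots\}$. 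The resulting $h$-tuple avoids $b_1,\ldots,b_p$ entirely, so letting $p\to\infty$ produces elements of $R_k^h(A)\cap[B]^h$ with arbitrarily large entries, hence infinitely many.

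The main obstacle I anticipate is the bookkeeping around the $\liminf$: one must extract pairs $(n,m)$ that are simultaneously large enough for $L_{nm}$ to be near $1$ and for $c_{nm}$ to be near $l$, and the non-symmetric roles of $n$ and $m$ (and of the tail shift by $p$) require care in checking the asymptotic equivalence of the lemma's bound. The integrality step — passing from $|Z|>h-1$ to $|Z|\ge h$ — is precisely what forces the strict inequality $l<\frac{1}{\sqrt[k]{h-1}}$ rather than a non-strict one.
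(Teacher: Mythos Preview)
Your argument is correct. For the first assertion you proceed exactly as the paper does: apply Lemma~\ref{lemma} to finite truncations, use $L_{nm}\to 1$ together with pairs $(n,m)$ realizing the $\liminf$ to force $|Z|>h-1$, then pass to an $h$-element subset and enlarge $A^{(n)}$ to $A$. For the second assertion your approach diverges from the paper's and is in fact cleaner: you apply the lemma directly to the tail block $\{b_{p+1}<\ldots<b_{p+m}\}$, observing that the resulting bound $\left(\frac{n\sqrt[k]{m}}{a_n+b_{p+m}}\right)^{k}$ differs from $c_{n,p+m}^{-k}$ only by the factor $\bigl(\frac{m}{p+m}\bigr)$, which is harmless once $m$ is large relative to the fixed $p$. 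The paper instead thins $B$ to an arithmetic-progression-indexed subsequence $B'=\{b_{sn+j}\}$ (choosing a residue class $j$ along which the subsequence $\tau(n)$ hits infinitely often), checks that the $\liminf$ for the pair $(A,B')$ is still $0$, and then invokes the already-proved first assertion; the largeness of $s$ forces $\max Z>s$. Both devices accomplish the same thing---pushing the produced $h$-tuple arbitrarily far out in $B$---but your tail-block trick avoids the residue-class bookkeeping and the self-referential appeal to the first part.
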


\begin{proof}
Pick increasing functions $\sigma,\tau:\N\to\N$ such that
$$\lim_{n\to\infty}\frac{a_{\sigma(n)}+b_{\tau(n)}}{\sigma(n)\sqrt[k]{\tau(n)}}\ =\ l\,.$$
For every $n$, apply Lemma \ref{lemma} to the finite sets
$A_n=\{a_1<\ldots<a_{\sigma(n)}\}$ and $B_n=\{b_1<\ldots<b_{\tau(n)}\}$, 
and get the existence of a subset $Z_n\subseteq B_n$ such that
\smallskip
\begin{enumerate}
\item
$\left|\bigcap_{z\in Z_n}\left(A_n+z\right)\right|\ge k$.
\item
$|Z_n| \ge L_n\cdot\left(\frac{\sigma(n)\sqrt[k]{\tau(n)}}{a_{\sigma(n)}+b_{\tau(n)}}\right)^{\!k}$
where
$L_n=\prod_{i=1}^{k-1}
\frac{1-\frac{i}{\sigma(n)}}{1-\frac{i}{a_{\sigma(n)}+b_{\tau(n)}}}$.
\end{enumerate}

Since $\lim_{n\to\infty} L_n=1$, we have that
$$\liminf_{n\to\infty}|Z_n|\ \ge\
\lim_{n\to\infty}\,L_n\cdot\left(\frac{\sigma(n)\sqrt[k]{\tau(n)}}
{a_{\sigma(n)}+b_{\tau(n)}}\right)^{\!k}\ =\
1\cdot \left(\frac{1}{l}\right)^k\ >\  h-1.$$
Let $t$ be an index such that $|Z_t|>h-1$, and pick
$z_1<\ldots<z_h\in Z_t$. Then:
$$\left|\bigcap_{i=1}^h(A+z_i)\right|\ \ge\
\left|\bigcap_{i=1}^h(A_t+z_i)\right|\ \ge\
\left|\bigcap_{z\in Z_t}(A_t+z)\right|\ \ge\ k.$$
As $Z_t\subset B$, we conclude that 
$\{z_1<\ldots<z_h\}\in R_k^h(A)\cap[B]^h$.

\smallskip
Now let us turn to the case $l=0$.
Given $s>1$, pick $j\le s$ such that the set
$T_j=\{\tau(n)\mid \tau(n)\equiv j\mod s\}$ is infinite,  
let $\xi,\zeta:\N\to\N$ be the increasing functions such that
$T_j=\{\tau(\xi(n))\}=\{s\cdot\zeta(n)+j\}$, and
let $B=\{b'_n\}$ be the set where $b'_n=b_{sn+j}$. Then for every $h>1$:
$$\liminf_{n,m\to\infty}\frac{a_n+b'_m}{n\cdot\sqrt[k]{m}}\ \le\ 
\lim_{n\to\infty}\frac{a_{\sigma(\xi(n))}+b'_{\zeta(n)}}
{\sigma(\xi(n))\cdot\sqrt[k]{\zeta(n)}}\ =$$
$$\lim_{n\to\infty}\frac{a_{\sigma(\xi(n))}+b_{\tau(\xi(n))}}
{\sigma(\xi(n))\cdot\sqrt[k]{\tau(\xi(n))} }\cdot
\sqrt[k]{\frac{s\cdot\zeta(n)+j}{\zeta(n)}}\ =\ l\cdot\sqrt[k]{s}\ =\ 0\ 
<\ \frac{1}{\sqrt[k]{h-1}}\,.$$
By what already proved above, we get the existence of an $h$-tuple
$$Z\ =\ \{z_1<z_2<\ldots<z_h\}\ \subseteq\ B'$$
such that $|\bigcap_{i=1}^h(A+z_i)|\ge k$.
It is clear from the definition of $B'$ that
$\max Z\ge b'_h\ge sh+j>s$. Since $s$ can be taken arbitrarily large,
we conclude that $R_k^h(A)\cap[B]^h$ is infinite, as desired.
\end{proof}

\smallskip
\begin{corollary}
Let $A=\{a_n\}$ and $B=\{b_n\}$ be infinite sets of natural numbers.
If there exists a function $f:\N\to\R^+$ such that
$$\limsup_{n\to\infty}\frac{a_n}{n\cdot f(n)}\ <\ \infty\quad\text{and}\quad
\lim_{n\to\infty}\frac{f(b_n)}{\sqrt[k]{n}}\ =\ 0\,,$$
then $R_k^h(A)\cap[B]^h$ is infinite for all $h$.
\end{corollary}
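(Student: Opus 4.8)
The plan is to deduce the statement directly from the $l=0$ case of Theorem \ref{main}, so the whole task reduces to checking that
$$\liminf_{n,m\to\infty}\frac{a_n+b_m}{n\sqrt[k]{m}}\ =\ 0.$$
Since every term of this double sequence is positive, the limit inferior $l$ is automatically $\ge 0$; thus it is enough to produce a single family of pairs $(n,m)$, with both indices tending to infinity, along which the ratio tends to $0$. Exhibiting such a family forces each infimum $\inf_{n,m\ge k}$ to be $0$, and hence $l=0$.

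The device that makes everything work is to couple the two indices by setting $n=b_m$. First I would translate the two hypotheses into usable inequalities. From $\limsup_n \frac{a_n}{n\,f(n)}<\infty$ one gets a constant $C$ and a threshold beyond which $a_N\le C\,N f(N)$; since $B$ is infinite we have $b_m\to\infty$, so this bound applies at $N=b_m$ for all large $m$, giving $a_{b_m}\le C\,b_m f(b_m)$. Substituting $n=b_m$ into the ratio then yields
$$\frac{a_{b_m}+b_m}{b_m\sqrt[k]{m}}\ \le\ \frac{C\,b_m f(b_m)+b_m}{b_m\sqrt[k]{m}}\ =\ C\cdot\frac{f(b_m)}{\sqrt[k]{m}}+\frac{1}{\sqrt[k]{m}}.$$
Both summands on the right vanish as $m\to\infty$: the first by the hypothesis $\lim_m \frac{f(b_m)}{\sqrt[k]{m}}=0$, the second trivially. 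Therefore the ratio tends to $0$ along the pairs $(b_m,m)$, which establishes $l=0$, and Theorem \ref{main} then gives that $R_k^h(A)\cap[B]^h$ is infinite for every $h$.

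The only points requiring care are the bookkeeping in the definition of the double limit inferior—confirming that one subsequence tending to $0$ actually pins $l$ at $0$—and the observation that $b_m\to\infty$, which is what licenses the asymptotic bound $a_N\le C\,N f(N)$ at the argument $N=b_m$. Neither is a real obstacle; the substantive step is the coupling $n=b_m$, which is precisely what turns the separate growth controls on $A$ and on $f\circ b$ into smallness of the combined quantity $\frac{a_n+b_m}{n\sqrt[k]{m}}$.
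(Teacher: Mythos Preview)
Your proof is correct and follows essentially the same route as the paper: reduce to the $l=0$ case of Theorem~\ref{main} by coupling $n=b_m$ and then controlling $\frac{a_{b_m}+b_m}{b_m\sqrt[k]{m}}$ via the two hypotheses. Your version is in fact slightly more explicit than the paper's, separating the $\frac{1}{\sqrt[k]{m}}$ term and extracting the constant $C$ from the $\limsup$ bound rather than manipulating $\limsup\cdot\liminf$ directly.
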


\begin{proof}
It directly follows from Theorem \ref{main}, since
$$\liminf_{n,m\to\infty}\frac{a_n+b_m}{n\sqrt[k]{m}}\ \le\ 
\liminf_{m\to\infty}\frac{a_{b_m}+b_m}{b_m\cdot\sqrt[k]{m}}\ =\ 
\liminf_{m\to\infty}\frac{a_{b_m}}{b_m\cdot\sqrt[k]{m}}\ \le$$
$$\le\  
\limsup_{m\to\infty}\frac{a_{b_m}}{b_m\cdot f(b_m)}\cdot
\liminf_{m\to\infty}\frac{f(b_m)}{\sqrt[k]{m}}\ =\ 0\,.$$
\end{proof}

\smallskip
An an example, we now see a property that also applies to all zero 
density sets having at least the
same ``asymptotic size'' as the prime numbers.

\smallskip
\begin{corollary}
Assume that the sets $A=\{a_n\}$ and $B=\{b_n\}$ satisfy
the conditions 
$\sum_{n=1}^\infty\frac{1}{a_n}=\infty$ and
$\log b_n=o(n^\varepsilon)$ for 
all $\varepsilon>0$. Then for every $h$ and $k$, 
there exist infinitely many $h$-tuples
$\{\beta_1<\ldots<\beta_h\}\subset B$ such that
each distance $\beta_j-\beta_i$ equals the distance of
$k$-many pairs of elements of $A$.
\end{corollary}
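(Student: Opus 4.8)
The plan is to deduce this corollary from Theorem \ref{main} together with Proposition \ref{deltainr}. Indeed, the asserted property of an $h$-tuple $Z=\{\beta_1<\ldots<\beta_h\}\subset B$ — that every distance $\beta_j-\beta_i$ be the common distance of $k$-many pairs from $A$ — says exactly that $\Delta(Z)\subseteq R_k(A)$. By Proposition \ref{deltainr} this holds whenever $Z\in R_k^h(A)$, so it suffices to produce infinitely many such $Z$ in $[B]^h$, \emph{i.e.}\ to show that $R_k^h(A)\cap[B]^h$ is infinite. By the second clause of Theorem \ref{main}, this follows once we check that
$$l\ =\ \liminf_{n,m\to\infty}\frac{a_n+b_m}{n\sqrt[k]{m}}\ =\ 0.$$
Thus the whole task reduces to verifying $l=0$ from the two hypotheses on $A$ and $B$.

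First I would exploit $\sum_{n}\frac{1}{a_n}=\infty$ to control $A$ along a subsequence. Since $\sum_n\frac{1}{n(\log n)^2}$ converges, the divergence of $\sum_n\frac{1}{a_n}$ forbids the inequality $a_n>n(\log n)^2$ from holding for all large $n$; hence there is an infinite set $S\subseteq\N$ of indices with $a_n\le n(\log n)^2$ for every $n\in S$. This is the only use of the first hypothesis, and it is worth stressing that it yields control of $a_n/n$ \emph{only along} $S$, not for every $n$ — a uniform bound $a_n=O(n(\log n)^c)$ can genuinely fail, which is precisely why one should argue through the $\liminf$ of Theorem \ref{main} rather than through the earlier Corollary.

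Next, for each $n\in S$ I would pair it with $m=m(n)=\lceil(\log n)^{3k}\rceil$, which tends to infinity with $n$, and split
$$\frac{a_n+b_m}{n\sqrt[k]{m}}\ =\ \frac{a_n}{n\sqrt[k]{m}}+\frac{b_m}{n\sqrt[k]{m}}.$$
For the first summand, $\sqrt[k]{m}\ge(\log n)^3$ together with $a_n\le n(\log n)^2$ yields the bound $\tfrac{1}{\log n}\to 0$. For the second, the hypothesis $\log b_m=o(m^\varepsilon)$ applied with $\varepsilon=\tfrac{1}{6k}$ gives $\log b_m=o\big((\log n)^{1/2}\big)$, so that $b_m\le\exp\big(o(\sqrt{\log n})\big)$ is of smaller order than $n=\exp(\log n)$, whence $b_m/(n\sqrt[k]{m})\le b_m/n\to 0$. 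Letting $n\to\infty$ through $S$ exhibits a sequence of pairs $(n,m(n))$ with both coordinates tending to infinity along which the ratio tends to $0$; since all terms are nonnegative, this forces $l=0$, completing the reduction.

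The only delicate point is the calibration in the choice $m=m(n)\sim(\log n)^{3k}$: it must grow fast enough that $\sqrt[k]{m}$ dominates the available bound $(\log n)^2$ on $a_n/n$, yet slowly enough — a fixed power of $\log n$ — that the sub-polynomial growth of $B$ keeps $b_m$ negligible against $n$. Balancing these two requirements (and choosing $\varepsilon$ accordingly) is the crux; once the subsequence $S$ and the matched exponent are in hand, the estimates are routine.
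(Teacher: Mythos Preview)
Your argument is correct, and in fact more careful than the paper's own proof on the one delicate point you flag. The paper proceeds by asserting that $\sum_n 1/a_n=\infty$ implies $a_n=o(n\log^2 n)$ and then invokes the preceding Corollary with $f(n)=\log^2 n$ (which requires $\limsup_n a_n/(n\log^2 n)<\infty$). You instead bypass that Corollary and apply Theorem~\ref{main} directly, choosing a subsequence $S$ where $a_n\le n(\log n)^2$ and pairing each $n\in S$ with $m(n)\approx(\log n)^{3k}$ so that both summands in $(a_n+b_m)/(n\sqrt[k]{m})$ vanish.

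Your caution is well placed: the divergence of $\sum_n 1/a_n$ genuinely only yields $a_n\le n(\log n)^2$ infinitely often, not $a_n=o(n\log^2 n)$. One can build increasing sequences consisting of long runs of consecutive integers separated by large jumps so that $\sum 1/a_n=\infty$ while $\limsup a_n/(n\log^2 n)=\infty$; for such $A$ the $\limsup$ hypothesis of the intermediate Corollary fails. Your route through the double $\liminf$ of Theorem~\ref{main}, which only needs a single well-chosen sequence of pairs $(n,m(n))$, sidesteps this entirely. The price you pay is the explicit calibration of $m(n)$ and of the exponent $\varepsilon=1/(6k)$, but you balance these cleanly: $m$ grows fast enough that $\sqrt[k]{m}$ beats $(\log n)^2$, yet is still only polylogarithmic in $n$ so that $b_m=\exp(o(m^\varepsilon))$ stays negligible against $n$.
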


\begin{proof}
By the hypothesis $\sum_{n=1}^\infty\frac{1}{a_n}=\infty$
it follows that $a_n=o(n\log^2n)$, 
and so the previous corollary applies with $f(n)=\log^2n$. 
Clearly, every $h$-tuple
$\{\beta_1<\ldots<\beta_h\}\in R_k^h(A)\cap[B]^h$ satisfies
the desired property.
\end{proof}

\bigskip
\section{Finitely $\Delta$-sets}

Recall that a set $A\subseteq\N$ is called a \emph{Delta-set} 
(or \emph{$\Delta$-set} for short) if 
$\Delta(X)\subseteq A$ for some infinite $X$. A basic result 
is the following: ``If $A$ has positive upper asymptotic density, then 
$\Delta(A)\cap\Delta(X)\ne\emptyset$ for all infinite sets $X$.''
(See, \emph{e.g.}, \cite{be}.) Another relevant property is
that $\Delta$-sets are \emph{partition regular}, \emph{i.e.} 
the family $\F$ of $\Delta$-sets satisfies the following property:

\smallskip
\begin{itemize}
\item
If a set $A=A_1\cup\ldots\cup A_r$ of $\mathcal{F}$ is partitioned
into finitely many pieces, then at least one of the pieces
$A_i$ belongs to $\F$.
\end{itemize}

\smallskip
To see this, let an infinite set of distances 
$\Delta(X)=C_1\cup\ldots\cup C_r$ be finitely partitioned,
and consider the partition of the pairs $[X]^2=D_1\cup\ldots\cup D_r$ where
$\{x<x'\}\in D_i\Leftrightarrow x'-x\in C_i$. By the infinite Ramsey Theorem,
there exists an infinite $Y\subseteq X$ and an index $i$ such that
$[Y]^2\subseteq D_i$, which means $\Delta(Y)\subseteq C_i$.

\smallskip
A convenient generalization of $\Delta$-sets is the following.

\smallskip
\begin{definition}
$A$ is a \emph{finitely $\Delta$-set} (or \emph{$\Delta_f$-set} for short)
if it contains the distances of finite sets of arbitrarily large size, \emph{i.e.},
if for every $k$ there exists $|X|=k$ such that $\Delta(X)\subseteq A$.
\end{definition}

\smallskip
Trivially every $\Delta$-set is a $\Delta_f$-set, but not conversely.
For example, take any sequence $\{a_n\}$ such that $a_{n+1}>a_n\cdot n$,
let $A_n=\{a_n\cdot i\mid i=1,\ldots,n\}$, and consider
the set $A=\bigcup_{n\in\N}A_n$.
Notice that for every $n$, one has $\Delta(A_n)\subseteq A_n$, and hence $A$ is 
a $\Delta_f$-set. However $A$ is not a $\Delta$-set. Indeed, assume by contradiction
that $\Delta(X)\subseteq A$ for some infinite $X=\{x_1<x_2<\ldots\}$;
then $x_2-x_1=a_k\cdot i$ for some $k$ and some $1\le i\le k$.
Pick a large enough $m$ so that $x_m>x_2+a_k\cdot k$.
Then $x_m-x_1,x_m-x_2\in\bigcup_{n>k}A_n$, and so
$x_2-x_1=(x_m-x_1)-(x_m-x_2)\ge a_{k+1}>a_k\cdot k\ge x_2-x_1$,
a contradiction.
We remark that there exist ``large'' sets that are not $\Delta_f$-sets. 
For instance, consider the set $O$ of odd numbers; it is readily seen that
$\Delta(Z)\not\subseteq O$ whenever $|Z|\ge 3$.

\smallskip
The following property suggests the notion of $\Delta_f$-set as
combinatorially suitable.

\smallskip
\begin{proposition}
The family of $\Delta_f$-sets is partition regular.
\end{proposition}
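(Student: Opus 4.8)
The plan is to mimic the Ramsey-theoretic argument used above for ordinary $\Delta$-sets, replacing the infinite Ramsey Theorem by its finite version. Suppose $A=A_1\cup\ldots\cup A_r$ is a $\Delta_f$-set. The goal is to produce a single index $i$ for which $A_i$ contains $\Delta(Y)$ for finite sets $Y$ of arbitrarily large size, since that is exactly the assertion that $A_i$ is a $\Delta_f$-set.

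First I would fix a target size $n$ and invoke the finite Ramsey Theorem: let $N=N(n,r)$ be large enough that every coloring of $[W]^2$ with $r$ colors, where $|W|=N$, admits a monochromatic $Y$ with $|Y|=n$. Since $A$ is a $\Delta_f$-set, I can choose a finite set $X$ with $|X|=N$ and $\Delta(X)\subseteq A$. I then color each pair $\{x<x'\}\in[X]^2$ by an index $\chi(\{x,x'\})=i$ with $x'-x\in A_i$; such an $i$ exists because $x'-x\in\Delta(X)\subseteq A=\bigcup_{i} A_i$. By the choice of $N$, there is a monochromatic $Y\subseteq X$ of size $n$, say of color $i$, and then every pair $\{y<y'\}\subseteq Y$ satisfies $y'-y\in A_i$, that is $\Delta(Y)\subseteq A_i$.

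The remaining step handles the dependence of the color on $n$. For each $n$ the construction above yields an index $i(n)\in\{1,\ldots,r\}$ together with a set $Y_n$ of size $n$ such that $\Delta(Y_n)\subseteq A_{i(n)}$. Since the values $i(n)$ lie in a finite set, by the pigeonhole principle some fixed index $i$ satisfies $i(n)=i$ for arbitrarily large $n$. For this $i$ the set $A_i$ contains the distance sets of finite sets of unbounded size, so $A_i$ is a $\Delta_f$-set, which is what we wanted.

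I expect the only delicate point to be the last paragraph. The Ramsey step, applied to a single size $n$, locates a large monochromatic set inside \emph{some} piece, whereas partition regularity demands one fixed piece that works simultaneously for all sizes; this uniformity is recovered precisely from the finiteness of the number of colors. Everything else is a routine finite transcription of the $\Delta$-set argument recalled before the statement.
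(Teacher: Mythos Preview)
Your proof is correct and follows essentially the same route as the paper's: apply the finite Ramsey theorem to a sufficiently large witness $X$ to get a monochromatic $Y$ of the desired size inside some piece, and then use the pigeonhole principle on the finitely many colors to obtain a single piece that works for unbounded sizes. The only cosmetic difference is that the paper colors pairs of indices in $\{1,\ldots,N\}$ and pulls back to $X$, whereas you color pairs of $X$ directly.
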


\begin{proof}
Let $A$ be a $\Delta_f$-set, and let $A=C_1\cup\ldots\cup C_r$ be a 
finite partition. Given $k$, by the finite Ramsey theorem
we can pick $n$ large enough so that every $r$-partition of the pairs
$[\{1,\ldots,n\}]^2$ admits a homogeneous set of size $k$.
Now pick a set $X=\{x_1<\ldots<x_n\}$ with $n$-many elements
such that $\Delta(X)\subseteq A$, and consider 
the partition $[\{1,\ldots,n\}]^2=D_1\cup\ldots\cup D_r$
where $\{i<j\}\in D_t\Leftrightarrow x_j-x_i\in C_t$. Then
there exists an index $t_k$ and a set $H=\{h_1<\ldots<h_k\}$ of
cardinality $k$ such that $[H]^2\subseteq D_{t_k}$.
This means that the set 
$Y=\{x_{h_1}<\ldots<x_{h_k}\}$ is such that 
$\Delta(Y)\subseteq C_{t_k}$. Since there are only finitely many
pieces $C_1,\ldots,C_r$, there exists $t$ such that $t_k=t$ for 
infinitely many $k$. In consequence, $C_t$ is a $\Delta_f$-set.
\end{proof}

\smallskip
As a straight consequence of Theorem \ref{main}, we can
give a simple sufficient condition on the ``asymptotic size" of a set $A$
that guarantees the corresponding $k$-recurrence sets
be finitely $\Delta$-sets.

\smallskip
\begin{theorem}\label{thm2}
Let $k\ge 2$ and let the infinite set $A=\{a_n\}$ be such that 
$a_n=o(n^{k/k-1})$. Then $R_k(A)$ is a $\Delta_f$-set. 
\end{theorem}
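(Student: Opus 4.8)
The plan is to deduce Theorem \ref{thm2} directly from Theorem \ref{main} together with Proposition \ref{deltainr}. The key observation is that "$R_k(A)$ is a $\Delta_f$-set" means precisely that $R_k(A)$ contains $\Delta(Z)$ for finite sets $Z$ of arbitrarily large cardinality, and Proposition \ref{deltainr} tells us that any $h$-tuple $Z\in R_k^h(A)$ automatically satisfies $\Delta(Z)\subseteq R_k(A)$. So it suffices to produce, for every $h$, some $h$-tuple lying in $R_k^h(A)$ --- and Theorem \ref{main} is exactly the engine that manufactures such tuples once we verify the hypothesis on the $\liminf$.

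\emph{First} I would choose the second set $B$ in the most generous way possible, namely $B=\N$, so that $b_m=m$ and the constraint $R_k^h(A)\cap[B]^h\ne\emptyset$ becomes simply $R_k^h(A)\ne\emptyset$. With this choice the relevant quantity is
$$l\ =\ \liminf_{n,m\to\infty}\frac{a_n+m}{n\sqrt[k]{m}}.$$
\emph{Next} I would show that the growth hypothesis $a_n=o(n^{k/(k-1)})$ forces $l=0$. The intuition is that we are free to let $n$ and $m$ tend to infinity along any coupled route, so I would tie them together by setting $m$ equal to a suitable power of $n$ --- the natural balance point is $m\approx n^{k/(k-1)}$, which makes $n\sqrt[k]{m}\approx n\cdot n^{1/(k-1)}=n^{k/(k-1)}$, so the denominator matches the critical order of magnitude of $a_n$. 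Along this route the term $m/(n\sqrt[k]{m})=\sqrt[k]{m}/n\cdot\sqrt[k]{m}^{\,0}$ also has to be controlled; I would verify that both $a_n/(n\sqrt[k]{m})\to 0$ (using $a_n=o(n^{k/(k-1)})$) and $m/(n\sqrt[k]{m})\to 0$ tend to $0$ along this coupling, so that the $\liminf$, being bounded above by the limit along this particular sequence of indices, is forced to be $0$.

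\emph{Then}, with $l=0$, the second clause of Theorem \ref{main} applies (here one genuinely needs $k\ge 2$, since for $k=1$ the statement is only the trivial remark that $\Delta(A)$ is a $\Delta_f$-set): for \emph{every} $h$ we obtain $R_k^h(A)\cap[\N]^h$ infinite, in particular nonempty, so there is some $h$-tuple $Z\in R_k^h(A)$. Applying Proposition \ref{deltainr} to this $Z$ yields $\Delta(Z)\subseteq R_k(A)$ with $|Z|=h$, and since $h$ was arbitrary this exhibits distance sets of arbitrarily large finite sets inside $R_k(A)$, which is the definition of a $\Delta_f$-set.

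\emph{The main obstacle} will be the middle step: confirming that the coupled limit along $m\approx n^{k/(k-1)}$ really is $0$ and that this legitimately bounds the \emph{double} $\liminf$ from above. The subtlety is purely bookkeeping about exponents --- one must check that $m=\lceil n^{k/(k-1)}\rceil$ grows fast enough that $\sqrt[k]{m}/n\to 0$ (equivalently $m=o(n^k)$, which holds since $k/(k-1)<k$ for $k\ge 2$) yet slow enough that $a_n/(n\sqrt[k]{m})\to 0$ follows from the hypothesis; balancing these two competing requirements is exactly where the exponent $k/(k-1)$ is dictated, and it is the only place the specific little-$o$ rate is used.
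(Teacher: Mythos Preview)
Your overall plan---take $B=\N$, verify $l=0$, invoke Theorem~\ref{main}, then apply Proposition~\ref{deltainr}---is exactly the paper's route. The gap is in the algebra of the middle step.

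You write that $m/(n\sqrt[k]{m})\to 0$ is ``equivalently $m=o(n^k)$''. This is a slip: $m/\sqrt[k]{m}=m^{(k-1)/k}$, not $m^{1/k}$, so
\[
\frac{m}{n\sqrt[k]{m}}\ =\ \frac{m^{(k-1)/k}}{n},
\]
and the correct equivalent condition is $m=o\!\left(n^{k/(k-1)}\right)$, not $m=o(n^k)$. With your proposed coupling $m=\lceil n^{k/(k-1)}\rceil$ one gets $m^{(k-1)/k}/n\to 1$, so along that route $(a_n+m)/(n\sqrt[k]{m})\to 0+1=1$, and you only conclude $l\le 1$. That is not strong enough to apply Theorem~\ref{main} for arbitrary $h$. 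In fact the two requirements you isolate---$m$ large enough that $a_n/(n\sqrt[k]{m})\to 0$, yet $m=o(n^{k/(k-1)})$---cannot both be met by a pure power $m=n^\alpha$; one must let the little-$o$ rate of $a_n$ enter the choice of $m$.

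The paper's remedy is to couple by $m=a_n$. Then numerator and denominator balance perfectly:
\[
\frac{a_n+m}{n\sqrt[k]{m}}\ =\ \frac{2a_n}{n\,a_n^{1/k}}\ =\ 2\,\frac{a_n^{(k-1)/k}}{n}\ =\ \left(2^{\,k/(k-1)}\cdot\frac{a_n}{n^{k/(k-1)}}\right)^{(k-1)/k}\longrightarrow 0,
\]
using $a_n=o(n^{k/(k-1)})$. This gives $l=0$ and the rest of your argument goes through verbatim.
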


\begin{proof}
Let $B=\N$, so $b_m=m$. By taking $m=a_n$, we obtain that
$$\liminf_{n,m\to\infty}\frac{a_n+m}{n\sqrt[k]{m}}\ \le\ 
\lim_{n\to\infty}\frac{a_n+a_n}{n\sqrt[k]{a_n}}\ =\ 
\lim_{n\to\infty}\left(2^{\frac{k}{k-1}}\cdot
\frac{a_n}{n^{\frac{k}{k-1}}}\right)^{\frac{k-1}{k}}\ =\ 0\,.$$
Then Theorem \ref{main} applies, and for every $h$
we obtain the existence of a finite set $Z$ of cardinality $h$ 
such that $Z\in R_k^h(A)\cap[B]^h=R_k^h(A)$. 
But then, by Proposition \ref{deltainr}, $\Delta(Z)\subseteq R_k(A)$.
\end{proof}

\bigskip

\end{document}